\theoremstyle{definition}
\newtheorem{dfn}{Definition}
\newtheorem{defin}[dfn]{Definition}
\newtheorem{rem}[dfn]{Remark}
\theoremstyle{plain}
\newtheorem{lem}[dfn]{Lemma}
\newtheorem{thm}[dfn]{Theorem}
\newtheorem{prop}[dfn]{Proposition}
\newtheorem{ex}[dfn]{Example}
\newtheorem{cor}[dfn]{Corollary}
\newcommand{\Z}{{\mathbb Z}}
\newcommand{\R}{{\mathbb R}}
\newcommand{\N}{{\mathbb N}}
\DeclareMathOperator{\diam}{diam}
\numberwithin{equation}{section}
\title[Slow recurrence]{Estimating Hausdorff measure for Denjoy maps}
\keywords{Hausdorff measure, Denjoy example}
\subjclass{37E10, 37C45}
\begin{document}

\author{{\L}ukasz Pawelec}
\address{{\L}ukasz Pawelec, Instytut Matematyczny Polskiej Akademii Nauk, ul.~\'{S}niadeckich 8, 00-956 Warszawa, Poland}
\email{lpawelec@impan.pl}

\author{Mariusz Urba\'{n}ski}
\address{Mariusz Urba\'{n}ski, University of North Texas, Department of Mathematics, 1155 Union Circle \#311430, Denton, TX 76203-5017,
USA}
\email{urbanski@unt.edu}

\date{}
\begin{abstract}
By employing the recurrence method worked out in \cite{Pawpre}, we provide effective lower estimates of the proper--dimensional Hausdorff measure of  minimal sets of circle homeomorphisms that are not conjugate to any rotation. 
\end{abstract}
\maketitle

\section{Introduction}

In this paper we deal with orientation preserving homeomorphisms and, more specifically, diffeomorphisms of the unit circle~$\mathcal{S}^1$. A classical Poincar\'e's result states that for every such homeomorphism $f$ there is a unique number $\alpha\in[0,1)$ such that the map $f\colon \mathcal{S}^1\to\mathcal{S}^1$ and the rotation $R_\alpha(t):=t+\alpha\, (\!\!\!\!\mod\! 1)$ are semi-conjugate, i.e. $h\circ f = R_\alpha\circ h$, for some continuous surjection $h\colon\mathcal{S}^1\to\mathcal{S}^1$. The number $\alpha$ is called the rotation number of $f$. It can be defined more explicitly without bringing up semi-conjugacies but we do not need such definition in our paper. We do need semi-conjugacy.

Much later, Denjoy proved that if the derivative of $f$ is of bounded variation, then there is a (full) conjugacy between $f$ and $R_\alpha$, i.e. the map $h$ is a homeomorphism. However, if the derivative $f'$ is only $\delta$--H\"older continuous for some $0<\delta<1$, then the semi-conjugacy cannot be made into a full conjugacy. We then call the homeomorphism (or diffeomorphism) $f$ a \emph{Denjoy map}. 

The reason for the non-existence of the full conjugacy is the presence of  wandering intervals. Then the complement of the (forward and backward) orbit of that interval is the minimal set for the considered system $f\colon \mathcal{S}^1\to\mathcal{S}^1$. It is a compact, perfect, nowhere dense subset of $\mathcal{S}^1$, thus a Cantor set. In this paper we study the Hausdorff dimension and the Hausdorff measure of this unique minimal set of $f$. We will denote it by $\Omega(f)$ or, even simpler, just by $\Omega$.

Up to our knowledge, the, up to date, best estimates of the Hausdorff and box-counting dimensions of the minimal set of a Danjoy map were obtained by B.~Kra and J.~Schmeling in \cite{KS}. They proved a general lower bound of the box-counting dimension, depending only on the smoothness of the derivative of $f$, namely that $\dim_B(\Omega(f)) \geq\delta$. They also showed that the lower bound of the Hausdorff dimension depends additionally on the Diophantine class $\nu\geq1$ of the rotation number $\alpha$; see Section \ref{sec:def} for the definition. More precisely, they proved that $\dim_H(\Omega)\geq \delta/\nu$. Moreover, for the case of so-called \emph{classical} Denjoy maps (see Example \ref{ex:classic}), they showed that both the Hausdorff and box-counting dimensions are in fact equal to their lower bounds, resp. $\delta/\nu$ and $\delta$, and so they coincide (only) for $\nu=1$. 

In this paper we push these results further. We apply a rather non-standard method of estimating the Hausdorff measure from below developed by the first named author in \cite{Pawpre}. There it is shown that given a continuous self-map $T\colon X\to X$ of a compact metric space $X$ preserving a Borel probability measure $\mu$, a good knowledge of recurrence properties of the map $T$ leads to lower bounds of the density of $\mu$  with respect to a Hausdorff measure $H$; in the case when $\mu$ is absolutely continuous with respect to $H$, this density is the Radon-Nikodym derivative $d\mu/dH$. This bound entails in turn bounds on $H(X)$. 

Applying this method we arrive at an effective bound from below on the Hausdorff measure, in the relevant Hausdorff dimension, of the minimal set of a diffeomorphism $f\colon \mathcal{S}^1\to\mathcal{S}^1$. It depends on the Diophantine properties of the rotation number and on the lengths of the  of the wandering intervals of $f$, which as we know, are the connected components of $\mathcal{S}^1\setminus\Omega(f)$. See Theorem \ref{520221110} for a precise statement. We would like to emphasize that the estimates we obtain are effectively computable as we show in several examples in the last section of the paper.

Furthermore, our results allow us to recover the lower bounds on the Hausdorff dimension of $\Omega$ obtained in \cite{KS}. In addition, for $\nu=1$ we provide a simple upper estimate of the Hausdorff dimension of $\Omega$ and it coincides with the lower bound. In some cases we can even get a meaningful upper estimate of the Hausdorff measure. For example, our results show that for the \emph{classical} Denjoy map the relevant Hausdorff measure is finite and positive and we provide with concrete bounds.



\section{Setting, Definitions and some Tools Used}\label{sec:def}
In this section we introduce notation and bring up the tools which we will use in the subsequent sections. Throughout the paper we always assume that $\alpha\in (0,1)$, the rotation number of a considered circle diffeomorphism, is irrational, and that this diffeomorphism is exactly $\mathcal{C}^{1+\delta}$ smooth with some $\delta\in(0,1)$. We denote by $\|x\|$ is the norm (arc-wise distance from the rightmost point) of a point $x$ on the  circle. Lastly, we always assume that the total length of a circle is equal to $1$.

\subsection{Construction of Denjoy Maps} \ \\
We start with recalling briefly the standard construction of Denjoy maps. The notation we implement here follows \cite{KS}. A precise construction may be found e.g. in \cite{KH}.

\begin{dfn}\label{def:denjoy}
Given a number $\delta\in(0,1)$, a \emph{Denjoy sequence} of class $\delta$ is any sequence $\{\ell_n\}_{n\in\Z}$ of non-negative numbers satisfying the following two conditions.
\begin{equation}\label{eq:denj1}
\sum_{n\in \Z} \ell_n =1 
\end{equation}
and
\begin{equation}\label{eq:denj2}
\liminf_{n\to\pm\infty} \frac{\ln|\ell_n-\ell_{n+1}|}{\ln\ell_n}
=1+\delta.
\end{equation}
\end{dfn}
\begin{ex} \label{ex:classic}
The \emph{classical Denjoy sequence} of class $\delta$ is given by the following formula.
\begin{equation}
\ell_n=c_\delta (|n|+1)^{-1/\delta}, \  n\ge 1,
\label{eq:denjcl}
\end{equation}
where $c_\delta$ is the normalizing constant making the sum $
\sum_{n\in\Z}\ell_n$ equal to $1$.

Observe that the \emph{classical} sequence trivially satisfies the requirements of Definition~\ref{def:denjoy}. Such a sequence may be considered as a \emph{best one} in the class of Denjoy sequences.

Also, using Riemann's $\zeta$ function we may state the value of the normalising constant: 
\begin{equation}\label{eq:cdelta}
c_\delta = 2\zeta\Big(\frac{1}{\delta}\Big)-1.\end{equation}
\end{ex}

Now, fix a $\delta\in(0,1)$ and take any Denjoy sequence $(\ell_n)_{n\in \Z}$ of class $\delta$. To obtain a Denjoy map, we start with a circle of length $1$ with a map that is the rotation by $\alpha$. Then, we \emph{blow up} subsequently every point on the orbit of $0$, i.e. point of the form $\pm n\alpha$, $n\in \N_0$, to an open interval $J_n$ of length $\ell_n$, rescaling at every step the remainder of the circle to have its length equal equal to $1$.

\begin{figure}[h]
	\centering
\includegraphics[scale=0.9]{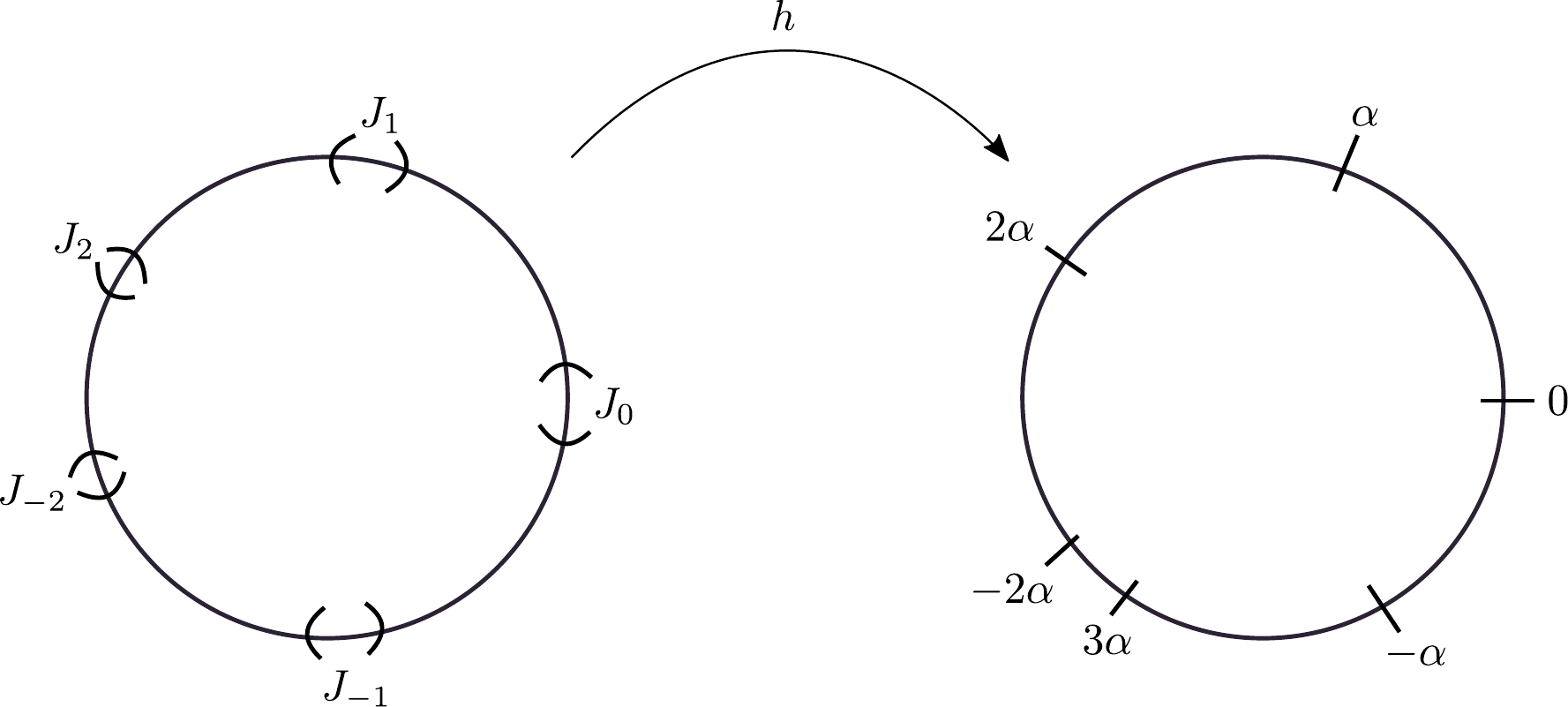}
\caption{Denjoy map construction}\label{fig:map}
\end{figure}

Then we map $J_n$ diffeomorphically onto $J_{n+1}$ with derivative equal to $1$ at the endpoints. This map has a continuous extension, which we denote by $f$, to the entire circle $S^1$, and, because of \eqref{eq:denj2}, this extension can be chosen to be of the class $\mathcal{C}^{1+\delta}$.

The minimal set (whose Hausdorff dimension and Hausdorff measure we want to estimate) of the map $f$ is then equal to
\begin{equation}
\Omega=\Omega_\alpha^\delta= S^1\setminus \bigcup_{n\in \Z} J_n.
\label{eq:defmin}
\end{equation}
Observe that $\Omega$ also depends on the choice of the sequence $(\ell_n)_{n\in \Z}$.
\begin{rem}
We want to note that in the construction of a Denjoy map $f$, the sum of lengths of $\ell_n$, $n\in \Z$, might actually be smaller than $1$, cf. \eqref{eq:denj1}. However, in such a situation the minimal set of $f$ would have Hausdorff dimension trivially equal to $1$ and positive Lebesgue measure. This is why we have removed such sequences from our considerations.
\end{rem}

\begin{rem} Observe that every Denjoy map can be obtained by such a construction. The non-existence of the full conjugation gives rise to the minimal set, whose \emph{omitted} intervals must follow the rotation by $\alpha$. Take the longest such interval, call it $J_0$, and then the semi-conjugation with the rotation about $\alpha$, defines the sequence $(\ell_n)_{n\in \Z}$. The property \eqref{eq:denj2} comes from the smoothness of the diffeomorphism. As we mentioned in the previous remark, the lengths $\ell_n$ need not sum up to $1$, but -- again -- we remove such case from our consideration.
\end{rem}

 We will denote by $h\colon\Omega\to S^1$ the semi-conjugacy given by Poincar\'e's Theorem. By its construction, this semi-conjugacy cannot be a (full) conjugacy. Nevertheless, $h^{-1}(z)$ is a singleton whenever $z$ does not belong to the orbit of $0$ under the rotation $R_\alpha$, i.e. whenever 
 $$
 z\notin \mathcal O_{R_\alpha}:=\{R^n(\alpha):n\in\Z\}.
 $$
Otherwise, $h^{-1}(z)$ is a doubleton. Furthermore, for any two points $x,y\in S^1\setminus O_{R_\alpha}$, we also have that
\begin{equation}
d(h^{-1}(x),h^{-1}(y))= \sum_{n\in\Z:R_\alpha^n(0)\in(x,y)}|J_n|,
\label{eq:distfor}
\end{equation}
where $|J_n|$ denotes the length of interval $J_n$ and $(x,y)$ is the shorter of the two arcs connecting $x$ and $y$; if the intervals have equal length, take the one in positive orientation. 

In general, i.e. for any points $x,y\in S^1$, the above formula actually still holds. More precisely, it holds for those respective preimages of $x$ and $y$ for which the arc connecting them is the shortest.

\subsection{Hausdorff Measure on Denjoy Minimal Sets} \ \\

As we are working with subtle measure estimates we deemed it prudent to put here the precise definitions in use in this paper.

First, recall our circle has length 1. We will use the standard version of the definition of the Hausdorff measure.
\begin{defin}The outer measure is the following
\[H_\beta(Y)= \lim_{r\to 0} \inf\left\{\sum_{k=1}^\infty (\diam U_k)^\beta : \forall_k \diam U_k <r \mbox{ and } Y\subset \bigcup_{k=1}^\infty U_k\right\},\]
where the infimum is take over all countable covers of $Y$ satisfying the conditions as stated. By Carath\'eodory's extension this gives the (typical) Hausdorff measure. 
\end{defin}
The next definition is also standard.
\begin{defin}
The Hausdorff dimension of the set $Y$ is given by the formula
\[\dim_H(Y)=\inf\{\beta \geq 0 : H_\beta(Y)=0\}.\]
\end{defin}

\subsection{Diophantine Properties} \ \\

We denote the standard continued fraction expansion of an irrational number $\alpha\in (0,1) $ as $[a_1, a_2, \ldots]$. The corresponding convergents are denoted as $\frac{p_n}{q_n}$. Recall that the convergents are the subsequent closest approximations of $\alpha$. Also, the following properties hold:
\begin{equation}\label{eq:confr1}
	q_{n}=a_n q_{n-1}+q_{n-2}, 
	\end{equation}
where $q_{-1}=0$ and $q_0=1$,	and 
\begin{equation}\label{eq:confr2}
	\frac{1}{(a_{n+1}+2)q_n}<\frac{1}{q_{n+1}+q_n}< \|q_n\alpha\|<\frac{1}{q_{n+1}}<\frac{1}{a_{n+1}q_n}.
\end{equation}
We now bring up the following classical definition.

	\begin{defin}
	An irrational number $\alpha\in(0,1)$ is said to be of \emph{Diophantine class} $\nu\in[1,+\infty]$, if the inequality
	\begin{equation}
	\|q\alpha\| < \frac{1}{q^\mu}.
	\label{eq:defdiophcl}
	\end{equation}
has infinitely many positive integral solutions $q$ for every $\mu<\nu$ and at only finitely many for every $\mu>\nu$. A number of Diophantine class $\nu=\infty$ is commonly called a Liouville number.
	\end{defin}
	
Recall that the set of irrational numbers of Diophantine class $\nu=1$ is of full Lebesgue measure. However, for any $\nu\in[1,+\infty]$ there exist $\alpha$ of Diophantine class $\nu$.

\begin{rem}
Observe that if $\nu>1$, then for infinitely many $n$ we have a very fast growth of~$q_n$'s. Combining \eqref{eq:confr2} and \eqref{eq:defdiophcl} gives $q_{n+1}\approx q_n^\nu$ for infinitely many $n$s. 
\end{rem}

\subsection{Recurrence Results} \ \\

To estimate the Hausdorff measure and Hausdorff dimension of the minimal set $\Omega$, we will apply the method described in a paper by the first named author, \cite{Pawpre}.

The result we need is this.

\begin{thm}
If $(X,d)$ is a metric space, $\mu$ is a Borel probability measure on $X$, and $T\colon X\to X$ is a Borel measurable map preserving measure $\mu$, then for every $\beta>0$ we have that
\begin{align}
&\liminf_{n\to \infty} nd^\beta (T^n(x),x))\leq g(x):=\limsup\limits_{r\to 0}\frac{H_\beta(B(x,r))}{\mu(B(x,r))}
\label{eq:thmrec}
\end{align}
\end{thm}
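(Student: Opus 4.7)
The inequality is a pointwise Boshernitzan-type quantitative recurrence estimate, in which the global Hausdorff mass of Boshernitzan's classical statement is replaced by the local density $g$. My plan is to prove the inequality for $\mu$-a.e.\ $x$ in three steps: a cover-and-Kac estimate inside an arbitrary small ball, intersection across scales to promote a ``single witness'' into the full $\liminf$ bound, and a Lebesgue-density argument to upgrade positive density to full measure.

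Fix a rational $c>0$ and set $F:=\{z:g(z)<c\}$ and $E:=\{z:\liminf_n n\,d^\beta(T^nz,z)\leq c\}$. It suffices to show $\mu(F\setminus E)=0$ for every such $c$: a countable intersection over rational $c\downarrow g(y)$ then yields $\liminf\leq g(y)$ for $\mu$-a.e.\ $y$. Pick $y\in F$; by the definition of $\limsup$ there exist $c_y\in(g(y),c)$ and $r_0(y)>0$ with $H_\beta(B(y,r))\leq c_y\mu(B(y,r))$ for every $0<r<r_0(y)$. Fix $0<r<r_0(y)$ and write $B:=B(y,r)$. For each scale $\delta>0$, choose a Borel cover $\{U_i^{(\delta)}\}$ of $B$ with $\diam U_i^{(\delta)}\leq\delta$ and $\sum_i(\diam U_i^{(\delta)})^\beta\leq (c_y+\varepsilon)\mu(B)$, and apply Kac's lemma to each cell $U$: writing $\tau_U$ for the first-return time to $U$,
$$
\mu\bigl\{z\in U:\tau_U(z)>c/(\diam U)^\beta\bigr\}\leq(\diam U)^\beta/c.
$$
For $z\in U$ outside this bad set, the witness $n:=\tau_U(z)$ satisfies $T^nz\in U$, so $d(T^nz,z)\leq\diam U$ and $n\,d^\beta(T^nz,z)\leq c$. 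Summing over the cover, the set $G_\delta\subset B$ of $z$ admitting such a witness has $\mu(G_\delta)\geq\mu(B)\bigl(1-(c_y+\varepsilon)/c\bigr)$.

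Taking $\delta_k\to 0$ and using the elementary bound $\mu(\limsup_k G_{\delta_k})\geq\limsup_k\mu(G_{\delta_k})$, a set of $\mu$-measure at least $\mu(B)\bigl(1-(c_y+\varepsilon)/c\bigr)$ lies in $G_{\delta_k}$ for infinitely many $k$. For non-periodic $z$ in this set, the witnesses $n_k$ satisfy $n_k\,d^\beta(T^{n_k}z,z)\leq c$ and $d(T^{n_k}z,z)\leq\delta_k\to 0$, forcing $n_k\to\infty$ and hence $z\in E$; periodic $z$ already lie in $E$ trivially. Thus every $y\in F$ is a positive-$\mu$-density point of $E$, and the Lebesgue density theorem then forces $\mu(F\setminus E)=0$: any $y\in F\setminus E$ would $\mu$-a.e.\ be a density-$1$ point of $E^c$ and hence a density-$0$ point of $E$, contradicting its positive $E$-density. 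The main obstacle I foresee is this final density-upgrade step: in a general metric space the Lebesgue density theorem requires a Vitali or doubling-type covering property for $\mu$, so one must either invoke such a property (which is automatic in the paper's $\mathcal{S}^1$ setting) or refine the Kac bookkeeping --- for instance by iterating over nested shrinking balls with $c_y\downarrow g(y)$ --- to drive the fraction $(c_y+\varepsilon)/c$ in the measure bound down to zero and conclude $\mu(E\cap B)=\mu(B)$ directly.
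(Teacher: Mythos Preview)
The paper does not prove this theorem; it is quoted from the external reference \cite{Pawpre} and used as a black box. So there is no ``paper's own proof'' to compare against. What follows is an assessment of your argument on its own merits.

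Your cover-plus-Kac core is correct and is exactly the Boshernitzan mechanism: from $H_\beta(B)\le c_y\mu(B)$ you extract, at every scale $\delta$, a cover with $\sum(\diam U_i)^\beta\le(c_y+\varepsilon)\mu(B)$; Kac plus Chebyshev gives $\mu\{z\in U_i:\tau_{U_i}(z)>c/(\diam U_i)^\beta\}\le(\diam U_i)^\beta/c$; summing and applying reverse Fatou yields a set of $\mu$-measure at least $\mu(B)\bigl(1-(c_y+\varepsilon)/c\bigr)$ of points in $E$. All of this is sound.

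The gap is precisely where you flagged it, and your proposed repair does not close it. Letting $c_y\downarrow g(y)$ and $\varepsilon\downarrow 0$ only drives the density lower bound to $1-g(y)/c$, not to $1$; when $g(y)>0$ this is bounded away from $1$, so you cannot conclude $\mu(E\cap B(y,r))=\mu(B(y,r))$ ``directly'' by this route. You genuinely need a differentiation/covering theorem to pass from ``every $y\in F$ has positive $E$-density'' to $\mu(F\setminus E)=0$. In a bare metric space this fails; one must assume a Vitali/Besicovitch property for $\mu$ (automatic on $\mathcal{S}^1$, hence harmless for the applications in this paper), or else the conclusion should be read as holding in such spaces. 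If you want to avoid density entirely, the standard dodge is to replace the local quantity $g$ by its global analogue: one proves instead that if $H_\beta(Y)<\infty$ for some Borel $Y$, then $\liminf_n n\,d^\beta(T^nx,x)\le H_\beta(Y)/\mu(Y)$ for $\mu$-a.e.\ $x\in Y$, which follows from your Kac step with $B$ replaced by $Y$ and requires no density theorem. The pointwise inequality with $g$ then follows by taking $Y=B(y,r)$ and applying a Vitali argument --- so the differentiation hypothesis is hidden either way.
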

This result is applied in the following way. For $\mu$-almost, but see the comment in the next paragraph, every $x\in X$ find a lower bound of the recurrence speed, i.e. the left hand side of the inequality above. This gives a lower bound on the above function $g$, giving in turn a lower bound on the Hausdorff measure of $X$ by using the formula
\[H_\beta(X)=\int_X g(x) d\mu,\]
and the fact that $\mu(X)=1$.

Note that we do not have to really care about what the invariant measure actually is $X$ is compact and the map $T\colon X\to X$ is continuous. Such measure then exists because of Bogolubov-Krylov Theorem. Indeed, ff we do know what the invariant measure is, then we may do the calculations for almost every point with respect to this invariant. If not, then we need to do the calculations for all points; except countably many of them, if the measure is non-atomic.

\subsection{The Main Results} \ \\

As the general statement of our estimating method is rather complicated, we will state a few simpler versions. Also, our method can be applied under weaker assumptions, ex. for some sequences $(\ell_n)_{n\in\Z}$ not satisfying the requirements of Definition~\ref{def:denjoy}; see Example~ \ref{ex:expsubseq}.
\begin{thm}\label{520221110}
Fix $\delta\in(0,1)$ and irrational rotation number $\alpha$. 
Consider the corresponding Denjoy map $f\colon S^1\to S^1$ of the circle $S^1$ that is semi-conjugate to the rotation by $R_\alpha$ with gaps $J_n$, $n\in\Z$ whose sizes $|J_n|=\ell_n$ satisfy formulas \eqref{eq:denj1} and \eqref{eq:denj2} of Definition~\ref{def:denjoy}. For every $n\in\N$, set 
\[
N_n:=\left[\frac{q_n+q_{n+1}}{2}\right].
\]
Then for any $\beta>0$ we have that
\begin{equation}
H_\beta(\Omega_f)\geq \liminf_{n\to \infty} q_n\min{}^\beta\big\{|J_i|:-N_n\leq i \leq N_n\big\}.
\label{eq:Hausformula}
\end{equation}
In fact, the following more complicated but stronger, estimate holds.
\begin{equation}
H_\beta(\Omega_f)
\geq \liminf_{n\to \infty}q_n\left(\sum_{l=-\infty}^{+\infty}\min\big\{|J_i|:(2l-1)N_n\leq i \leq (2l+1)N_n\big\}\right)^\beta.
\label{eq:Hausformulb}
\end{equation}
Th strongest estimate we prove is the following. For every $n\in\N$, set $Q_n:=q_n+q_{n+1}$ and denote by $\mathcal{P}^n$ the set of all sequences 
$(P^{(n)}_l)_{l\in\Z}$ that satisfy the formula $P^{(n)}_l-P^{(n)}_{l-1}=Q_n$. Then
\begin{equation} 
H_\beta(\Omega_f)
\geq \liminf_{n\to \infty} q_n\sup{}^\beta\left\{\sum_{l=-\infty}^{+\infty} \min\big\{|J_i|:P^{(n)}_l\leq i < P^{(n)}_{l-1}\big\}:P^{(n)}\in \mathcal{P}^n\right\}.
\label{eq:Hausformulc}
\end{equation}

\end{thm}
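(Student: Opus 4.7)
The plan is to apply the recurrence theorem of the previous subsection with $T=f$, $X=\Omega_f$, the given $\beta$, and with the $f$-invariant Borel probability measure $\mu$ on $\Omega_f$ characterised by $h_*\mu=\mathrm{Leb}$, where $h\colon\Omega_f\to S^1$ is the semi-conjugacy. Since Lebesgue measure is $R_\alpha$-invariant and $h\circ f=R_\alpha\circ h$, $\mu$ is $f$-invariant; it is non-atomic because $h$ is at most two-to-one and the exceptional set $h^{-1}(\mathcal{O}_{R_\alpha})$ is countable, hence $\mu$-null. For any $x\in\Omega_f\setminus h^{-1}(\mathcal{O}_{R_\alpha})$, writing $z=h(x)$, one has $f^{q_n}(x)=h^{-1}(z+q_n\alpha)$, and formula \eqref{eq:distfor} gives
\begin{equation*}
d\bigl(f^{q_n}(x),x\bigr)\;=\;\sum_{i\in\Z:\;R_\alpha^i(0)\in I_n(z)}|J_i|,
\end{equation*}
where $I_n(z)$ is the shorter arc joining $z$ to $z+q_n\alpha$, of length $\|q_n\alpha\|$.

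The core combinatorial step, which I expect to be the main obstacle, is a three-distance-type claim: for every integer $A$, the $Q_n=q_n+q_{n+1}$ rotation iterates $\{R_\alpha^i(0):A\le i<A+Q_n\}$ partition the circle into arcs of maximum length at most $\|q_n\alpha\|$. This should follow from the best-approximation bounds \eqref{eq:confr2} together with the classical three-gap theorem, since by step $Q_n$ the long gaps remaining after the first $q_{n+1}$ iterates have been subdivided down to at most the closest-return length $\|q_n\alpha\|$. (A boundary case where an orbit point lands exactly on $\partial I_n(z)$ occurs only on a $\mu$-null set of points $x$ and can be discarded.) Granting the claim, the arc $I_n(z)$ contains at least one orbit point from every block $[P^{(n)}_{l-1},P^{(n)}_l)$ of any partition $P^{(n)}\in\mathcal{P}^n$, yielding the pointwise bound
\begin{equation*}
d\bigl(f^{q_n}(x),x\bigr)\;\ge\;\sum_{l\in\Z}\min\bigl\{|J_i|:P^{(n)}_{l-1}\le i<P^{(n)}_l\bigr\}.
\end{equation*}

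The right-hand side is independent of $x$. Taking the supremum over $P^{(n)}\in\mathcal{P}^n$, multiplying by $q_n$, raising to the power $\beta$, and passing to $\liminf_n$ produces a constant $B$ such that $\liminf_n q_n d^\beta(f^{q_n}(x),x)\ge B$ for every $x$ in the full-measure set. The recurrence theorem then yields $g(x)\ge B$ $\mu$-almost everywhere, and the identity $H_\beta(\Omega_f)=\int g\,d\mu$ together with $\mu(\Omega_f)=1$ delivers \eqref{eq:Hausformulc}. Estimate \eqref{eq:Hausformulb} is the specific choice $P^{(n)}_l:=(2l+1)N_n$, noting that $2N_n+1\ge Q_n$ so that each block still contains $Q_n$ consecutive integers; and \eqref{eq:Hausformula} is the further coarsification in which all blocks except $l=0$ are discarded via the trivial bound $\sum_l a_l\ge a_0$ on non-negative summands.
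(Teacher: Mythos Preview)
Your approach is essentially the paper's: apply the recurrence inequality, translate $d(f^m(x),x)$ via the semi-conjugacy into a sum of $|J_i|$ over indices $i$ with $i\alpha$ in an arc of length $\|m\alpha\|$, and invoke the three-gap theorem to ensure that every block of $Q_n$ consecutive integers contributes at least one such index. The paper proves \eqref{eq:Hausformula} first and then refines to \eqref{eq:Hausformulb} and \eqref{eq:Hausformulc}, while you go in the reverse order and are more explicit about $\mu$; these are only presentational differences.

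There is, however, one step you skip that the paper does address. The recurrence theorem bounds $g(x)$ below by the \emph{full} $\liminf_{m\to\infty} m\,d^\beta(f^m(x),x)$, not by the subsequence $\liminf_{n} q_n\,d^\beta(f^{q_n}(x),x)$; passing from the latter to the former is the wrong direction for a liminf. The paper handles this by observing that the closest returns of $f$ on $\Omega_f$ occur exactly at the times $q_n$, so the full liminf is realised along that subsequence. An alternative patch, using your own combinatorial claim, is cleaner: for any $m$ with $q_k\le m<q_{k+1}$ the arc $I_m(z)$ has length $\|m\alpha\|\ge\|q_k\alpha\|$, hence still meets every $Q_k$-block, so $d(f^m(x),x)$ dominates the same block-sum you attach to $q_k$ and therefore $m\,d^\beta(f^m(x),x)\ge q_k(\text{sum}_k)^\beta$. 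With this one line inserted your argument is complete.
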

A direct straightforward application of the first estimate in the above Theorem~\ref{520221110} gives the following.
\begin{cor}\label{cor:dimform}
With the notation and hypotheses of Theorem~\ref{520221110}, we have that
$$
\dim_H(\Omega_f)\geq \sup\left\{\beta\in\R^+: \liminf_{n\to \infty}q_n\bigg(\sum_{l=-\infty}^{+\infty}\min\big\{|J_i|:(2l-1)N_n\leq i \leq (2l+1)N_n\big\}\bigg)^\beta>0\right\}
$$
and
$$
\dim_H(\Omega_f)\geq \inf\left\{\beta\in\R^+:\liminf_{n\to \infty}q_n\bigg(\sum_{l=-\infty}^{+\infty}\min\big\{|J_i|:(2l-1)N_n\leq i \leq (2l+1)N_n\big\}\bigg)^\beta=0\right\}.
$$
\end{cor}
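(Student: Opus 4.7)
The plan is to obtain both inequalities as direct consequences of estimate \eqref{eq:Hausformulb} in Theorem~\ref{520221110}. Writing
\[
L(\beta):=\liminf_{n\to\infty}q_n\bigg(\sum_{l=-\infty}^{+\infty}\min\big\{|J_i|:(2l-1)N_n\leq i\leq(2l+1)N_n\big\}\bigg)^{\!\beta},
\]
the theorem gives $H_\beta(\Omega_f)\ge L(\beta)$ for every $\beta>0$. The definition of Hausdorff dimension immediately converts this into the implication: if $L(\beta)>0$, then $H_\beta(\Omega_f)>0$, so $\beta\le\dim_H(\Omega_f)$. Taking the supremum over all such $\beta$ yields the first displayed inequality.

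The second inequality will follow from the first as soon as I know that $\beta\mapsto L(\beta)$ is non-increasing, because then the sets $\{\beta>0:L(\beta)>0\}$ and $\{\beta>0:L(\beta)=0\}$ partition $(0,\infty)$ into a left interval and a right interval sharing a common endpoint, so the supremum in the first bound equals the infimum in the second. To verify this monotonicity, I would bound the inner sum
\[
S_n:=\sum_{l=-\infty}^{+\infty}\min\big\{|J_i|:(2l-1)N_n\leq i\leq(2l+1)N_n\big\}
\]
by replacing each minimum over a window of $2N_n+1$ consecutive indices by the average over that window; since consecutive windows overlap only at a single endpoint, every index $i\in\Z$ lies in at most two windows, and the total length $\sum_i|J_i|=1$ then yields $S_n\le 2/(2N_n+1)$. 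Because $N_n\to\infty$, in particular $S_n<1$ for all sufficiently large $n$, so for those $n$ the map $\beta\mapsto q_nS_n^\beta$ is non-increasing; as the $\liminf$ preserves eventual inequalities, $L$ itself is non-increasing on $(0,\infty)$.

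There is essentially no obstacle in this argument; it is almost pure bookkeeping applied to Theorem~\ref{520221110}. The only step requiring a short calculation is the bound $S_n\le 2/(2N_n+1)$, which secures the monotonicity of $L$ and lets me identify the two right-hand sides of the corollary as the same number, both bounded above by $\dim_H(\Omega_f)$.
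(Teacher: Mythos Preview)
Your argument is correct and matches the paper's intent: the paper gives no proof at all, simply declaring the corollary a ``direct straightforward application'' of Theorem~\ref{520221110}, and your first paragraph is exactly that application.

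One remark: your monotonicity argument for $L$, while correct, is more than you need for the second inequality. Since $L(\beta)\ge 0$ always, the sets $\{\beta:L(\beta)>0\}$ and $\{\beta:L(\beta)=0\}$ partition $(0,\infty)$; hence every $\beta$ strictly below $\inf\{\beta:L(\beta)=0\}$ automatically satisfies $L(\beta)>0$, so $\inf\{\beta:L(\beta)=0\}\le\sup\{\beta:L(\beta)>0\}$ without any monotonicity. The second inequality is therefore an immediate consequence of the first. Your monotonicity computation does, however, show that the two right-hand sides actually coincide, which is a pleasant extra observation.
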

Lastly, with some additional work, as a consequence of the above corollary, we will prove (in Section \ref{sec:ex}) the lower bound on the Hausdorff dimension of $\Omega_f$ which was obtained first by Kra and Schmelling in \cite{KS} with an entirely different method. 
\begin{prop}\label{thm:dimbound}
Under assumptions as above 
\begin{equation}
\dim_H(\Omega_f)\geq \frac{\delta}{\nu}.
\label{eq:dimniceform}
\end{equation}
\end{prop}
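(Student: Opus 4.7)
The strategy is to feed an arbitrary $\beta<\delta/\nu$ into the first (weakest) estimate \eqref{eq:Hausformula} of Theorem~\ref{520221110} and verify that the resulting $\liminf$ is strictly positive, so that $H_\beta(\Omega_f)>0$ and hence $\dim_H(\Omega_f)\ge\beta$; letting $\beta\nearrow\delta/\nu$ will then give \eqref{eq:dimniceform}. The problem decouples into two independent quantitative inputs: an upper bound on the window radius $N_n$ in terms of a power of $q_n$, and a lower bound on the smallest gap length $|J_i|$ appearing in that window.

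The arithmetic input is immediate. For any $\mu>\nu$, the definition of Diophantine class combined with the inequality $\|q_n\alpha\|<1/q_{n+1}$ from \eqref{eq:confr2} forces $q_{n+1}\le q_n^{\mu}$ for all sufficiently large $n$, and hence $N_n\le 2q_n^{\mu}$.

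The analytic input is the main technical step. From \eqref{eq:denj2} one extracts, for any fixed $\eta\in(0,\delta)$ and all $|n|$ large, the one-sided bound $|\ell_n-\ell_{n+1}|\le\ell_n^{1+\delta-\eta}$. I would upgrade this to the pointwise estimate $\ell_n\ge c_\eta|n|^{-1/(\delta-\eta)}$ by setting $a_n:=\ell_n^{-(\delta-\eta)}$ and showing, via the mean value theorem applied to $x\mapsto x^{-(\delta-\eta)}$, that $|a_{n+1}-a_n|$ is uniformly bounded once $\ell_n$ is small. The micro-point is that the prescribed smallness of $|\ell_n-\ell_{n+1}|$ compared to $\ell_n$ forces the intermediate point $\xi_n$ between $\ell_n$ and $\ell_{n+1}$ to lie within a bounded factor of $\ell_n$, which exactly absorbs the singularity of $g'(x)=-(\delta-\eta)x^{-(\delta-\eta)-1}$; telescoping then yields $a_n\le C|n|$, and a symmetric argument handles the branch $n\to-\infty$.

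Combining the two ingredients, for every index $i$ with $|i|\le N_n\le 2q_n^\mu$ we obtain $|J_i|\ge c'q_n^{-\mu/(\delta-\eta)}$ and therefore
\[
q_n\,\min\{|J_i|:|i|\le N_n\}^\beta\ \ge\ (c')^\beta\,q_n^{\,1-\beta\mu/(\delta-\eta)}.
\]
Given $\beta<\delta/\nu$, one selects $\mu>\nu$ and $\eta\in(0,\delta)$ so close to $\nu$ and $0$ respectively that $\beta\mu/(\delta-\eta)<1$; since $q_n\to\infty$, the right-hand side tends to $+\infty$, and the proof is complete upon passing $\beta\nearrow\delta/\nu$. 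I expect the main obstacle to lie in the analytic step: because \eqref{eq:denj2} is a liminf rather than a limsup condition and no monotonicity of $(\ell_n)$ is assumed, one must verify that the bound on $|a_{n+1}-a_n|$ holds for \emph{every} sufficiently large $n$ rather than merely on a subsequence, and rule out degenerate cases where $\ell_n=0$ (which \eqref{eq:denj2} in fact excludes for large $|n|$).
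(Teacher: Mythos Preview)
Your proposal is correct and follows the same route as the paper: bound $N_n\le q_{n+1}\le q_n^{\nu+\varepsilon}$ from the Diophantine class, bound $\min_{|i|\le N_n}|J_i|$ from below by a negative power of $N_n$, and plug into \eqref{eq:Hausformula}. The only difference is that the paper delegates the gap lower bound $|J_n|>n^{-1/\theta}$ (your $\ell_n\ge c_\eta|n|^{-1/(\delta-\eta)}$ with $\theta=\delta-\eta$) to a cited lemma from \cite{KS}, whereas you sketch a self-contained derivation via the substitution $a_n=\ell_n^{-(\delta-\eta)}$ and telescoping; your worry about the $\liminf$ is unfounded, since by definition the inequality $\frac{\ln|\ell_n-\ell_{n+1}|}{\ln\ell_n}\ge 1+\delta-\eta$ holds for \emph{all} large $|n|$, which is exactly what your argument uses.
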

Let us also observe that if the Diophantine class $\nu$ of the rotation number $\alpha$ is equal to $1$, then we may easily get estimates from above on the Haudorff dimension (and in some cases the Hausdorff measure).
\begin{thm}\label{thm:upbound}
For $\nu=1$ and the classic Denjoy sequence (see Example \ref{ex:classic}) the Hausdorff dimension equals $\delta$ and the relevant Hausdorff measure is positive and finite.
\end{thm}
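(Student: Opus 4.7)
The strategy is to split the statement into four pieces: the two dimension bounds and the two Hausdorff-measure bounds. Since Proposition~\ref{thm:dimbound} already yields $\dim_H(\Omega_f)\ge\delta/\nu=\delta$ under $\nu=1$, and since any finiteness estimate $H_\delta(\Omega_f)<\infty$ forces $\dim_H(\Omega_f)\le\delta$, the substantive work reduces to proving that $H_\delta(\Omega_f)$ is both positive and finite. The positive lower bound I would obtain via the recurrence machinery of Theorem~\ref{520221110}, while the upper bound I would get via a direct cover of $\Omega_f$ by the natural orbit partition.

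Starting with positivity, I would apply the estimate \eqref{eq:Hausformulb} with $\beta=\delta$. Because $|J_i|=c_\delta(|i|+1)^{-1/\delta}$ is decreasing in $|i|$, the minimum over each block $(2l-1)N_n\le i\le(2l+1)N_n$ is attained at the endpoint of largest absolute value and is comparable to $c_\delta((2|l|+1)N_n)^{-1/\delta}$. Summing over $l\in\Z$,
\[
\sum_l\min\{|J_i|:(2l-1)N_n\le i\le(2l+1)N_n\}\;\asymp\;c_\delta N_n^{-1/\delta}\sum_{l\in\Z}(2|l|+1)^{-1/\delta},
\]
and the last series converges precisely because $\delta<1$ gives $1/\delta>1$. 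Raising to the power $\delta$ and multiplying by $q_n$, the right-hand side of \eqref{eq:Hausformulb} is comparable to $q_n/N_n\asymp q_n/(q_n+q_{n+1})$, which stays bounded below as soon as $q_{n+1}/q_n$ is bounded, the natural strong reading of $\nu=1$. Hence $H_\delta(\Omega_f)>0$.

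For finiteness I would build explicit covers. For each $n\in\N$ take the $Q_n=q_n+q_{n+1}$ arcs of the Denjoy circle whose boundaries are the endpoints $h^{-1}(k\alpha)$, $0\le k<Q_n$; these arcs cover $\Omega_f$ and their diameters tend to $0$ with $n$. By the three-distance theorem the corresponding rotation-circle arcs take only two lengths $L_1=\|q_n\alpha\|$ and $L_2=\|q_{n-1}\alpha\|$, with multiplicities $A$ and $B$ so that $AL_1+BL_2=1$. By the distance formula \eqref{eq:distfor}, the length of each Denjoy arc is $\sum_m\ell_m$ summed over those $m\notin\{0,\dots,Q_n-1\}$ with $m\alpha$ in the corresponding rotation arc. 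Those indices form an approximately arithmetic subsequence of spacing $T\approx 1/L$ whose smallest entry satisfies $|m_0|\gtrsim Q_n$, so the tail $\sum_{k\ge 0}c_\delta(m_0+kT)^{-1/\delta}$ is dominated by a constant times $L^{1/\delta}$. Consequently
\[
\sum_{\text{arcs}}(\text{length})^\delta\;\lesssim\;AL_1+BL_2\;=\;1
\]
uniformly in $n$, which yields $H_\delta(\Omega_f)<\infty$ and, as a by-product, $\dim_H(\Omega_f)\le\delta$.

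The main obstacle will be the uniform bound ``Denjoy length of each arc is $\lesssim L^{1/\delta}$'' used in the covering step: one has to control precisely which indices $m\ge Q_n$ (and which $m<0$) hit each individual rotation arc, and this requires exploiting the continued-fraction structure of the orbit of $\alpha$, naturally via Ostrowski-type expansions and their return-time estimates. A secondary subtlety is that Diophantine class $\nu=1$ as defined in Section~\ref{sec:def} still allows $q_{n+1}/q_n$ to be unbounded, while the positivity argument above genuinely needs $q_{n+1}\le Cq_n$; upgrading to the sharper estimate \eqref{eq:Hausformulc} and optimising over the partition does not by itself cure the $q_n/Q_n$ scaling, so the cleanest reading is that ``$\nu=1$'' is being used here in the strong sense of bounded partial quotients.
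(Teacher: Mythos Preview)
Your treatment of positivity is essentially the paper's own argument (the computation in Example~\ref{ex:clDen} via \eqref{eq:Hausformulb}), and your observation that the bound $q_n/(q_n+q_{n+1})$ collapses when the partial quotients are unbounded is a genuine issue that the paper glosses over with the phrase ``by the previous results.'' So on that half there is nothing to correct in your reasoning; if anything you are being more careful than the paper.

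For the upper bound, however, you are working much harder than necessary, and the obstacle you flag (controlling which indices $m$ land in each rotation arc via Ostrowski expansions) is a real difficulty of \emph{your} route, not of the theorem. The paper's argument is completely elementary and uses neither the three-gap theorem nor any Diophantine information about $\alpha$: for each $n$, the set $S^1\setminus\bigcup_{|k|\le n}J_k$ consists of exactly $2n+1$ arcs $I_1,\dots,I_{2n+1}$ covering $\Omega$, with total length $\sum_{|k|>n}\ell_k\le C\,n^{1-1/\delta}$ for the classical sequence. Since $t\mapsto t^\delta$ is concave, Jensen's inequality gives
\[
\sum_{p=1}^{2n+1}|I_p|^\delta\;\le\;(2n+1)^{1-\delta}\Big(\sum_{p}|I_p|\Big)^\delta\;\le\;(2n+1)^{1-\delta}\,C^\delta\,n^{\delta-1},
\]
which is bounded uniformly in $n$; since the diameters of the $I_p$ shrink, $H_\delta(\Omega)<\infty$ follows immediately. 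Note that this covering argument is insensitive to $\alpha$ and to $\nu$ (as the paper remarks afterwards), so all the work you invested in the three-gap partition and the arc-by-arc length estimates is unnecessary. Your scheme could in principle be pushed through, but it trades a one-line Jensen bound for a fairly delicate equidistribution analysis.
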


\section{Proof of Theorem~\ref{520221110}}
Fix an irrational number $\alpha\in (0,1)$. Take a point $x_0$ in $\Omega_f$. For convenience assume that $x_0\notin \mathcal O_{R_\alpha}$. As we have already remarked, we may ignore all points in $\mathcal O_{R_\alpha}$ since this set is countable. In fact, our estimates also work for such points, but then one needs to be wary of the  somewhat annoying double preimages. 

We are to estimate from below the number $\displaystyle \liminf_{n\to \infty} n\left(d(T^n(x_0),x_0)\right)^\beta$. Using \eqref{eq:distfor}, we see that 
\begin{equation} \label{eq:metric}
\displaystyle d(f^n(x_0),x_0) = \sum_{n}|J_n|,
\end{equation} where the sum is taken over all $n$, for which $n\alpha\in(h(x_0),h(f^n(x_0)))$. 

To find the lower limit we are only interested in the sequence of subsequent closest returns. Since the map $h$ semi-conjugates the dynamics on $\Omega_f$ with the rotation by $\alpha$ on the circle, the closest returns on $\Omega_f$ are the closest returns for the rotation.
They in turn are given by the reducts of the continued fraction expansion of $\alpha$. Thus, we only need to estimate the numbers
$$
d(f^{q_n}(x_0),x_0).
$$ 

Let us denote the $n$-th closest return of $x_0$ as $x_n=f^{q_n}(x_0)$. First, we estimate the sum of lengths \eqref{eq:metric} from below by taking only the biggest element.
\begin{align}
d(x_n,x_0)= \sum \left|J_{k}\right|\geq \left|J_{k_n}\right|,
\label{eq:defJkn}
\end{align}
where $k_n\in \Z$ is such that $k_n\alpha \in\big(h(x_0),h(x_n)\big)$ and the number $|J_{k_n}|$ is biggest possible (\emph{see Fig. \ref{fig:kn}}).
\begin{figure}[h]
	\centering
\includegraphics[scale=0.9]{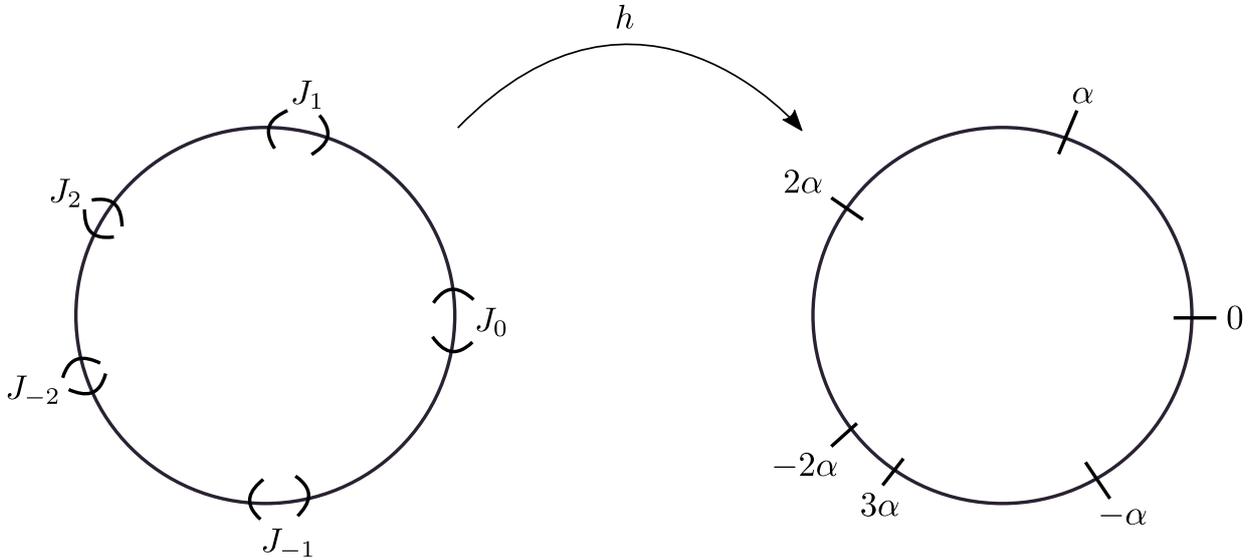}
\caption{Definition of the sequence $k_n$}\label{fig:kn}
\end{figure}

Now, we need to find bounds on $k_n$. By the semi-conjugacy $h(x_n)=h(x_0)+q_n\alpha$, and the search for $k_n$ may by restated as follows.

\emph{What is the smallest $k\in \N$, such that any interval on the circle of length $q_n\alpha$ contains at least one element of the set }
\[
\big\{-k\alpha, -(k-1)\alpha, \ldots, 0, \ldots, (k-1)\alpha,k\alpha\big\}?
\]
Since we are assuming that $h(x_0)$ is not in orbit of $0$ under $R_\alpha$, the gap between $h(x_n)$ and $h(x_0)$ cannot be equal to any of the above gaps. The answer to our above question is given in the following lemma. 

\begin{lem}\label{lem:frac}
The maximal gap between points in the set $\{t\alpha:t=-N\ldots 0\ldots N\}$ is at most of length $\|q_n\alpha\|$ if and only if 
$$
N\geq \left[\frac{q_n+q_{n+1}}{2}\right].
$$
\end{lem}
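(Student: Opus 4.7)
The plan is a direct, rotation-based combinatorial argument. First I would observe that $\{t\alpha : -N \leq t \leq N\}$ is a translate on the circle of $\{t\alpha : 0 \leq t \leq 2N\}$, so their gap structures coincide; we may work with whichever is convenient. Two standard facts from continued-fraction theory come into play: (i) the signs of $q_m\alpha - p_m$ alternate with $m$, which translates into the statement that $R_\alpha^{q_n}$ and $R_\alpha^{q_{n+1}}$ act as rotations of the circle by $\|q_n\alpha\|$ and $\|q_{n+1}\alpha\|$ respectively, \emph{in opposite directions}; and (ii) the best-approximation property: for every $1 \leq |k| < q_{n+1}$, $\|k\alpha\| \geq \|q_n\alpha\|$, with equality iff $|k| = q_n$.

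For the ``if'' direction, assume for contradiction that some arc $I$ of the partition has length $|I| > \|q_n\alpha\|$, with endpoints $a\alpha$ and $b\alpha$ ($a,b\in\{-N,\dots,N\}$). Using (i), the four rotations $R_\alpha^{\pm q_n}, R_\alpha^{\pm q_{n+1}}$ applied to the appropriate endpoint each move into $I$ by less than $|I|$, producing four candidate points inside $I$ whose indices are (after sign choice) $a+q_n$, $b-q_n$, $a-q_{n+1}$, $b+q_{n+1}$. If any of these indices lies in $[-N,N]$ we obtain the contradiction; otherwise the constraints force $a\in(N-q_n,\,q_{n+1}-N)$, an open interval of length $q_n+q_{n+1}-2N\leq 1$. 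A parity check, using that $q_n+q_{n+1}$ and $q_{n+1}-q_n$ share parity (their sum is $2q_{n+1}$), shows that under $N\geq \lfloor(q_n+q_{n+1})/2\rfloor$ this interval contains no integer.

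For the ``only if'' direction (contrapositive), suppose $N<\lfloor(q_n+q_{n+1})/2\rfloor$. If $q_n>N$, then by (ii) every nonzero point of the set lies at distance $\geq\|q_{n-1}\alpha\|>\|q_n\alpha\|$ from $0$, so the cyclic neighbors of $0$ bound a gap of the required length. If instead $q_n\leq N$, the interval $(N-q_n,q_{n+1}-N)$ now has length $\geq 2$ and therefore contains an integer $a$; for this $a$, neither $k=q_n$ nor $k=-q_{n+1}$ satisfies $a+k\in[-N,N]$, so the two would-be-shortest counter-clockwise steps from $a\alpha$ fall outside the set, and (ii) then forces the actual counter-clockwise cyclic neighbor of $a\alpha$ to lie at counter-clockwise distance strictly exceeding $\|q_n\alpha\|$.

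The main obstacle is the bookkeeping for the ``if'' direction: one must carefully orient the four rotations via the sign-alternation (i) so that each truly moves into $I$, and then verify the integer-freeness of the ``bad'' interval $(N-q_n,q_{n+1}-N)$. The parity coincidence behind that integer-freeness is precisely what accounts for the floor function appearing in the statement of the threshold.
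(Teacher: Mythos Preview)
Your argument is correct and takes a genuinely different route from the paper's. The paper simply invokes Lemma~2.1 of Ravenstein \cite{Rav} (the three-gap theorem in the special form that $\{k\alpha : 0\le k\le q_n+q_{n+1}-1\}$ partitions the circle into $q_n$ gaps of length $\|q_{n+1}\alpha\|$ and $q_{n+1}$ gaps of length $\|q_n\alpha\|$), and then observes that rotating by $N\alpha$ turns the symmetric orbit $\{t\alpha:-N\le t\le N\}$ into the one-sided orbit $\{t\alpha:0\le t\le 2N\}$ without changing gap sizes; the lemma then drops out in two lines. You instead build the argument by hand from the sign-alternation of the convergents and the best-approximation property, reducing the ``if'' direction to the integer-freeness of the open interval $(N-q_n,\,q_{n+1}-N)$ of length $q_n+q_{n+1}-2N\le 1$ with integer endpoints. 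Your approach is more self-contained---it does not import the three-gap theorem as a black box and makes the appearance of the threshold $\lfloor(q_n+q_{n+1})/2\rfloor$ completely transparent---while the paper's approach is much shorter but outsources the combinatorics to \cite{Rav}. One small point to tighten in your ``only if'' case with $q_n\le N$: when you select the integer $a\in(N-q_n,\,q_{n+1}-N)$ you must also have $a\in[-N,N]$ so that $a\alpha$ belongs to the set; since $q_n\le N$ here, the half-open interval $(N-q_n,\,N]$ already contains $q_n\ge 1$ integers, so this is immediate, and for such $a$ the difference $k=b-a$ with $b\in[-N,N]$ automatically lies in $(-q_{n+1},q_n)$, which is exactly the range where your fact~(ii) applies directly.
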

\begin{proof}
We use the three gaps theorem as stated in \cite{Rav}. Actually, we only need Lemma 2.1. from this paper, which (with the notation of this paper) states the following.
\begin{quote}
Take a forward orbit of a point under irrational rotation $R_\alpha$. For $K=q_n+q_{n+1}-1$ the set 
$$
\{\alpha k: k=0,\dots, K\}
$$ 
partitions the circle into $q_n$ gaps of length $\|q_{n+1}\alpha\|$ and $q_{n+1}$ gaps of length $\|q_n\alpha\|$. 
\end{quote}
The figure below illustrates this phenomenon. We took $\alpha=\sqrt{3}-1$ and drew the first $25$ iterates. Note that all the gaps are only of two lengths.

\begin{figure}[h]
	\centering
\includegraphics[scale=0.4]{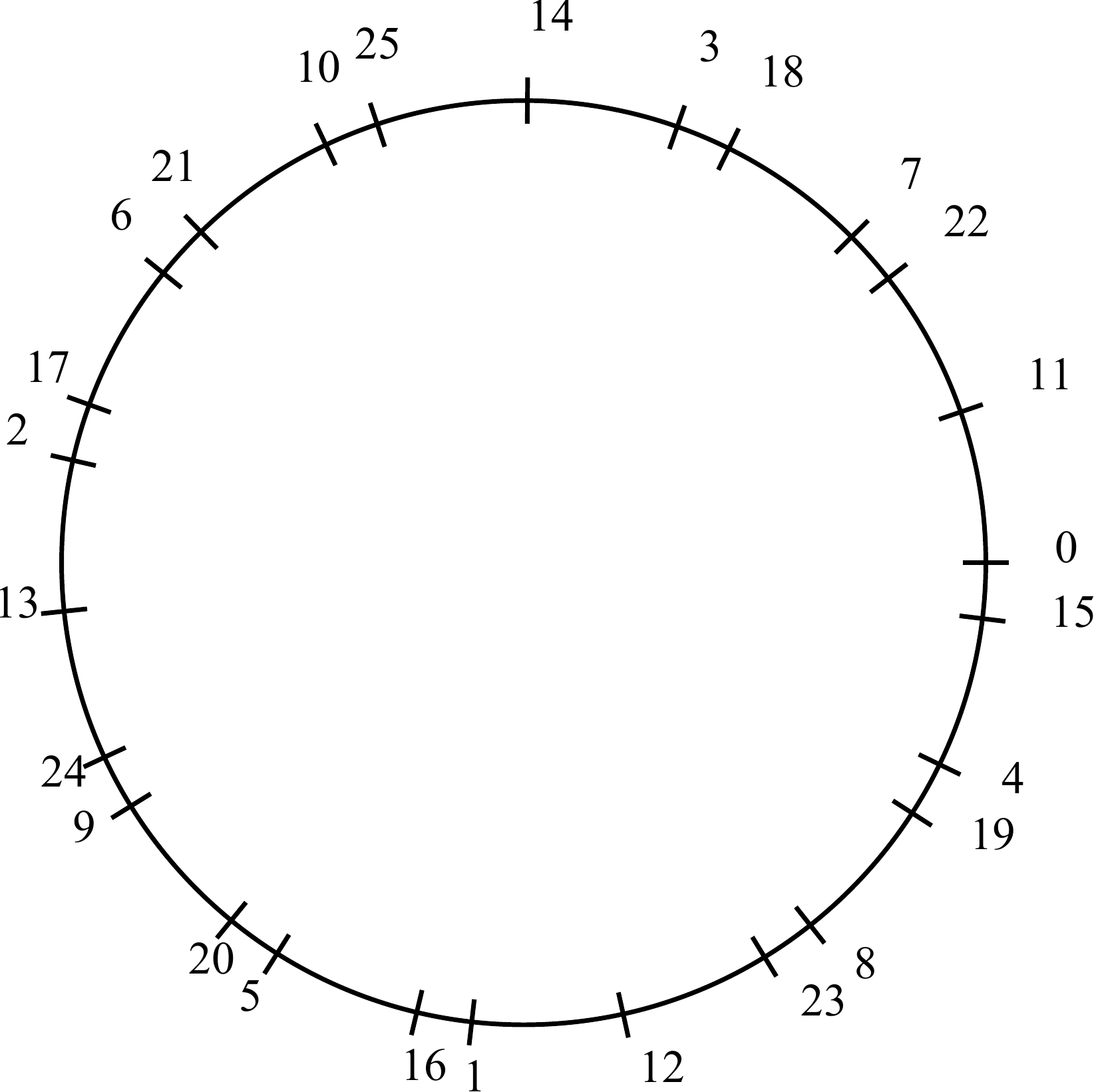}
\caption{Forward orbit of $0$ for $\alpha= \sqrt{3}-1$. }\label{fig:orblem1}
\end{figure}
In our setting we are working with $\displaystyle (t\alpha)_{t=-N}^N$. Rotating the circle (and all our points) by $N\alpha$, gives the set $\displaystyle (t\alpha)_{t=0}^{2N}$ and does not change the gap sizes, leading straight to the required result. 
The next figure shows the forward and backward orbit for the same $\alpha$. 

\begin{figure}[h!]
	\centering
\includegraphics[scale=0.4]{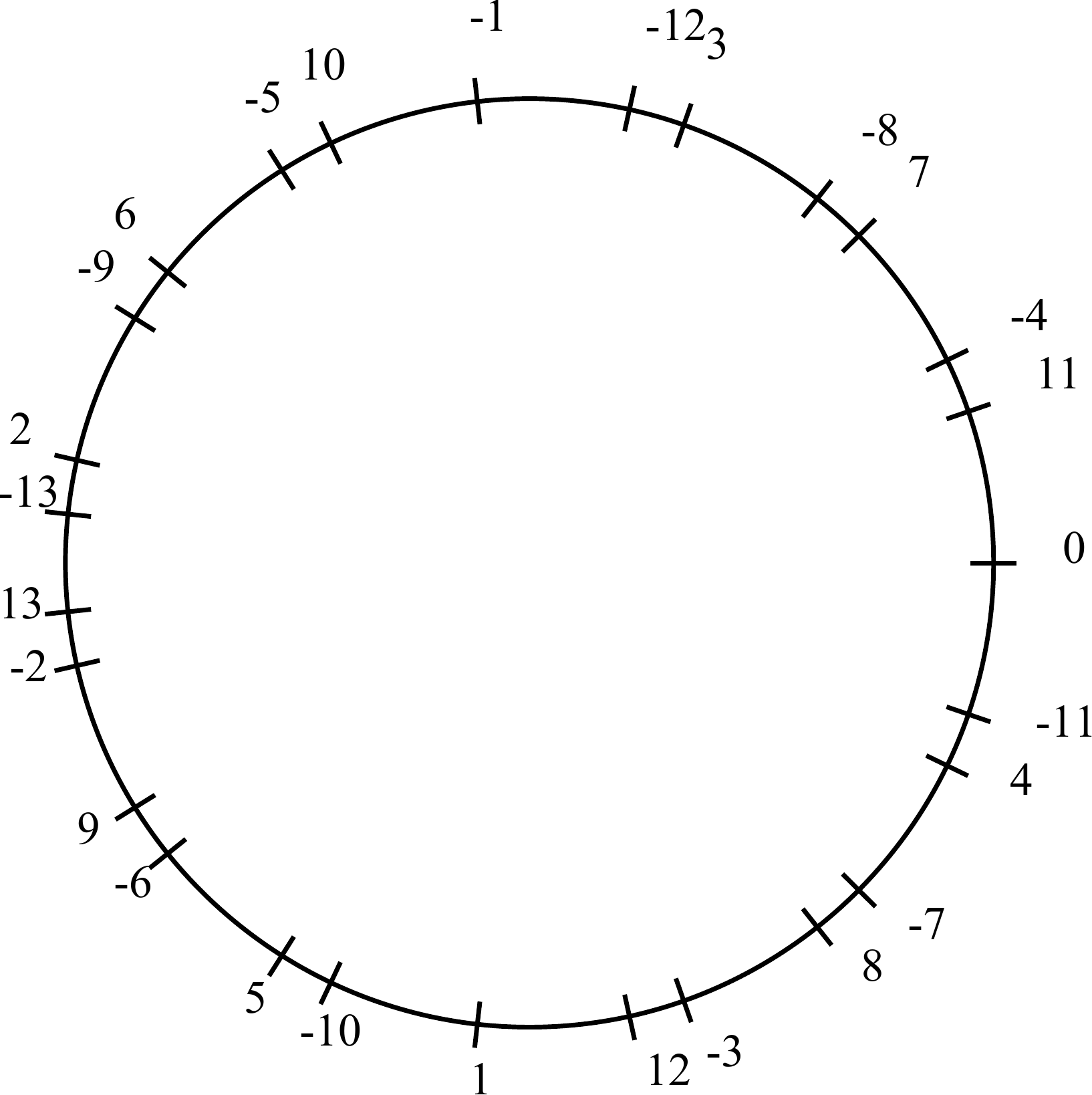}
\caption{Forward and backward orbit of $0$ for $\alpha= \sqrt{3}-1$. }\label{fig:orblem2}
\end{figure}
Note that there is one gap \emph{on the left of the circle} that is of a different size. This is the one resulting from the fact that such type of orbit (forward and backward) always has an odd number of points necessitating the rounding in our result. Removing $13$ gives the same picture as before but rotated in such a way that $-13$ becomes $0$. 
\end{proof}

Recall that we defined $N_n=\left[\frac{q_n+q_{n+1}}{2}\right]$. By Lemma~\ref{lem:frac}, we know that $|k_n|\leq N_n$, and we may estimate the size of $J_{k_n}$ from below as 
\begin{equation}
\left|J_{k_n}\right| \geq \min\big\{|J_i|:-N_n\leq i \leq N_n\big\}.
	\label{eq:sizeJkn}
\end{equation}
Inserting this into our estimate yields
\begin{equation}
H_\beta(\Omega)
\geq \liminf_{n\to \infty} nd^\beta(T^n(x_0),x_0)
\geq \liminf_{n\to \infty} q_n \min{}^\beta\big\{|J_i|:-N_n\leq i \leq N_n\Big\}.
\label{eq:Hausest}
\end{equation}
This ends the proof of \eqref{eq:Hausformula}.

To get better bounds we need to utilize all the elements of the sum in \eqref{eq:metric}.
Recall that the finite sequence $k\alpha$, $k\in [-N_n,\ldots,N_n]$, hits every gap of size $\|q_n\alpha\|$. Similarly, the sequence $k\alpha$, $k\in [N_n,\ldots,3N_n]$, hits every gap of the same size, and so does every sequence of type $k\alpha$, $[(2l-1)N_n,\ldots,(2l+1)N_n]$, for every integer $l$. Summing the lengths of the smallest intervals $J_i$ (on every sequence), leads straightforward to the estimate \eqref{eq:Hausformulb}.
\[H_\beta(\Omega)\geq \liminf_{n\to \infty} q_n\left(\sum_{l=-\infty}^{+\infty}\min\big\{|J_i|:(2l-1)N_n\leq i \leq (2l+1)N_n\big\}\right)^\beta.
\]
It is now easy to observe that our finite sequence may be \emph{moved around}, i.e. for any $P\in \Z$ the sequence $k\alpha$, for $k\in [P, P+Q_n)$ hits every gap of size $\|q_n\alpha\|$. We used $Q_n$ instead of $N_n=[Q_n/2]$ since there is no need for the finite sequences to be \emph{symmetric} w.r.t. zero. Also, removing the `integer part function' improves the estimate a little bit. 

It remains to divide for every $n\in\N$ the set of all integers into sets of length $Q_n$; in the statement of \eqref{eq:Hausformulc}, we called this `dividing' sequence $\left(P_l^{(n)}\right)_{l\in\Z}$. Then we find the minimal length of the intervals $|J_i|$ on every finite sequence, i.e. between $P_l^{(n)}$ and $P_{l-1}^{(n)}$, and we sum those values. The proves the general estimate \eqref{eq:Hausformulc}. 

\section{Examples} \label{sec:ex}
\begin{ex}\label{ex:clDen} Put $\alpha:=\frac{\sqrt{5}-1}{2}$ and
consider the classical Denjoy sequence 
\[
|J_n|=c_\delta (|n|+1)^{-1/\delta}, \ n\in\Z, 
\]
where, we recall, $c_\delta$ is the normalizing constant. Then the minimum in \eqref{eq:Hausformula} is attained at the largest possible $i$, i.e. at $i=N_n$, and, putting $\alpha_*:=\frac{1+\sqrt{5}}{2}$ our simpler estimate gives 
\begin{align*}
	H_\beta(\Omega)&\geq \liminf_{n\to \infty} c_\delta^\beta q_n \Big(\frac{1}{2}(q_n+q_{n+1}+1)+1\Big)^{-\beta/\delta} \geq \liminf_{n\to \infty} c_\delta^\beta q_n \Big(\frac{q_{n+2}}{2}+2\Big)^{-\beta/\delta}\\
	&\geq \liminf_{n\to \infty} \frac{c_\delta^\beta}{2^{-\beta/\delta}} \left(\frac{\alpha_*^{n}-1}{\sqrt{5}}\right) \left(\frac{\alpha_*^{n+2}+1}{\sqrt{5}}\right)^{-\beta/\delta}\\
	&\geq \frac{c_\delta^\beta\alpha_*^{-2\beta/\delta}}{2^{-\beta/\delta}5^{(\delta-\beta)/2\delta}}\lim_{n\to\infty} \alpha_*^{n(\delta-\beta)/\delta}.
\end{align*}
Therefore, $H_\beta(\Omega)>0$ for $\beta\in(0,\delta]$, thus proving that $\dim_H(\Omega)\geq \delta$, and we get the following estimate on the Hausdorff measure 
\[
H_\delta(\Omega) \geq 8c_\delta^\delta(1+\sqrt{5})^{-2}, 
\]
where $c_\delta=\sum_{n\in\Z} (|n|+1)^{-1/\delta}$.

We may easily improve the estimate by using the more complicated inequality \eqref{eq:Hausformulb}. The minima in the sum are attained at the endpoints of the intervals, i.e. at $(2k\pm1)N_n$ (where the sign depends on whether $k$ is positive or not). Let us first estimate the sum that occurs in the lower limit
\begin{align*}
 \left(\sum_{k=-\infty}^{+\infty} |J_{(2k\pm 1)N_n}|\right)^\beta&=  c_\delta^\beta  \left( (N_n+1)^{-1/\delta} + 2\sum_{k=1}^{+\infty} \big((2k+1)N_n+1\big)^{-1/\delta}\right)^\beta\\
&\geq  c_\delta^\beta  (N_n+1)^{-\beta/\delta}\left(1 + 2\sum_{k=1}^{+\infty} (2k+1)^{-1/\delta}\right)^\beta \\&= c_\delta^\beta  (N_n+1)^{-\beta/\delta}\left(1+ 2\Big(\zeta\Big(\frac{1}{\delta}\Big)-1-2^{-1/\delta}\zeta\Big(\frac{1}{\delta}\Big)\Big)\right)^\beta,
\end{align*}
where we utilised the Riemann $\zeta$ function to simplify the harmonic sums. We used the following trivial equality:
\[\zeta(s) = \sum_{n=1}^\infty \frac{1}{n^s}=\sum_{k=1}^\infty \frac{1}{(2k)^s}+\sum_{k=0}^\infty \frac{1}{(2k+1)^s} = 2^{-s}\zeta(s)+\sum_{k=1}^\infty \frac{1}{(2k+1)^s}+1\]

Finally, we see that, the only difference in comparison to the previous estimate is the last (rather complicated) constant. Thus we may repeat the same calculation, getting at the end $\beta=\delta$. Let us also use the `closed form' of $c_\delta$ \eqref{eq:cdelta} arriving at
\[
H_\delta(\Omega) \geq 8\left(2\zeta\Big(\frac{1}{\delta}\Big)-1\right)^\delta(1+\sqrt{5})^{-2}\left(1+ 2\Big(\zeta\Big(\frac{1}{\delta}\Big)-1-2^{-1/\delta}\zeta\Big(\frac{1}{\delta}\Big)\Big)\right)^\delta. 
\]
\end{ex}

\begin{ex}\label{ex:expsubseq}
Keep $\alpha=\frac{\sqrt{5}-1}{2}$ but change the sequence $|J_n|$, $ n\in\Z$, slightly. Start with $\ell_n=(|n|+1)^{-1/\delta}$ but change the elements $\ell_{\pm4^k}:= 7^{-k}$, $k\in\N$. Normalize the sum by $c$ and put $|J_n|:=c \ell_n$.

\emph{Important note:} this does not satisfy \eqref{eq:denj2}! That is, our construction does not give a diffeomorphism on the entire circle -- only a homeomorphism. However, the method is still applicable.

Now, using our estimate in the straightforward manner by taking tiny gaps as the minimum `pollutes' the gap sizes, gives a weak result. 

However, recall that the sequences $k\alpha$, $k\in [-N_n,\ldots,N_n]$ and $k\alpha$, $k\in [N_n,\ldots,3N_n]$, hit every gap of size $\|q_n\alpha\|$. So, the sequence  $k\alpha$, $k\in [N_n,\ldots,5N_n]$, hits every gap at least twice. Now observe that the growth of the sequence $N_n\approx \alpha_*^n\leq 1.7^n$, $n\in\N$, is much slower than the growth of the distances between numbers of gaps with \emph{bad} sizes, i.e. $4^k$, $k\in\N$. Thus our set $[N_n,\ldots,5N_n]$ for $n$ large enough may hit at most one element of the sequence $4^k$, $k\in\N$. Since every gap is hit at least twice, we may ignore that one, and recover the estimate as before, only slightly worsened.
\begin{align*}
	H_\beta(\Omega)&\geq \liminf_{n\to \infty} c_\delta^\beta q_n \Big(\frac{5}{2}(q_n+q_{n+1}+1)+1\Big)^{-\beta/\delta} \geq \liminf_{n\to \infty} c_\delta^\beta q_n \Big(\frac{5q_{n+2}}{2}+2\Big)^{-\beta/\delta}\\
	&\geq 5^{-\beta/\delta}\frac{5^{-\beta/\delta}c_\delta^\beta\alpha_*^{-2\beta/\delta}}{2^{-\beta/\delta}5^{(\delta-\beta)/2\delta}}\lim_{n\to\infty} \alpha_*^{n(\delta-\beta)/\delta}.
\end{align*}
Therefore, $H_\beta(\Omega)>0$ for $\beta\in(0,\delta]$, thus proving that $\dim_H(\Omega)\geq \delta$, and we get the following estimate on the Hausdorff measure  
\[
H_\delta(\Omega) \geq \frac{8}{5}c_\delta^\delta(1+\sqrt{5})^{-2}.
\]
As above one could improve this estimate by using the whole sum, i.e. applying \eqref{eq:Hausformulc}.
\end{ex}

\begin{ex}
Take $\alpha\in(0,1)$ whose continued fraction expansion satisfies the relation 
\[
q_{n+1}=q_n^2
\] 
for every $n\in\N$. Then 
\[
a_n\approx c^{2^n}
\] 
for some integer $c\geq 2$. $\alpha$ is then of Diophantine class $\nu=2$. Set
\[
|J_n|:=cn^{-3}\ln|n|,
\]
where $c$ is the corresponding normalizing constant. 

We know that for such quickly growing numbers $q_n$ we have for any $\varepsilon>0$ and all integers $n\ge 1$ large enough that 
\begin{align}
	\frac{q_{n+1}+q_n}{2}<\frac{q_{n+1}}{2-\varepsilon}=\frac{q_n^2}{2-\varepsilon}.
\end{align}
Using this formula in our estimate, i.e. the computation as in the previous example, leads in the current case to the following.
$$
\begin{aligned}
H_\beta(\Omega)
&\geq \liminf_{n\to \infty} q_n\frac{c^\beta(\ln|q_n^2|-\ln|2-\varepsilon|)}{q_n^{6\beta}(2-\varepsilon)^{-3}} \\
&=c^\beta(2-\varepsilon)^3\liminf_{n\to \infty}q_n^{1-6\beta} (\ln|q_n^2|-\ln|2-\varepsilon|).
\end{aligned}
$$
We see from this inequality that if $\beta\leq \frac{1}{6}$, then $H_\beta(\Omega)=+\infty$, giving the estimate \[\dim_H(\Omega)\geq 1/6.\]
\end{ex}
\begin{proof}[Proof of Proposition \ref{thm:dimbound}]
To get the desired lower bound on the Hausdorff dimension of $\Omega_f$, we need two ingredients: some kind of a universal bound on the lengths $|J_n|$, $n\in\Z$, and some bound on the growth of the denominators of the continued fraction expansion.

The assumption  \eqref{eq:denj2} imposed on the sequence $\ell_n=|J_n|$, $n\in\Z$, easily leads to the following result which was stated and proved as Lemma 2.3 in \cite{KS}.
\begin{quote}
For all $0<\theta<\delta<1$ there exists $s_\theta\ge 1$ such that
\begin{equation}
|J_n|>n^{-1/\theta}
\label{eq:KSLemma}
\end{equation}
for all integers $n>s_\theta$.
\end{quote}

Now we shall deal with the growth of the $q_n$s, $n\in\N$. Combining the continued fraction property \eqref{eq:confr2} and the Diophantine class definition \eqref{eq:defdiophcl} we get for every $\varepsilon>0$ and all $n\geq s_\varepsilon$  that
$$
\frac{1}{q_{n+1}}>\|q_n\alpha\|>\frac{1}{q_n^{\nu+\varepsilon}}.
$$
This gives the desired estimate 
\begin{equation}\label{eq:diophcomb}
q_{n+1}<q_n^{\nu+\varepsilon}.
\end{equation}

It remains to estimate below the lower limit
\begin{equation}
 \liminf_{n\to \infty}q_n\min{}^\beta\big\{|J_i|:-N_n\leq i \leq N_n\big\}.
\label{eq:proppf1}
\end{equation}

To do this, first fix $\theta\in(0,\delta)$ and also any $\varepsilon>0$. Then fix any integer $n>\max(n_\varepsilon, n_\theta)$. We will obtain the desired lower estimate by using first \eqref{eq:KSLemma}, then the definition of $N_n$ together with the fact $N_n<q_{n+1}$, and finally \eqref{eq:diophcomb}. With any $\beta>0$, all of these lead to
\begin{align*}
	\liminf_{n\to \infty} q_n \min{}^\beta\big\{|J_i|:-N_n\leq i \leq N_n\big\} 
	&\geq \liminf_{n\to \infty} q_n \Big( N_n^{-1/\theta}\Big)^\beta \geq \liminf_{n\to \infty} q_n \cdot q_{n+1}^{-\frac{\beta}{\theta}}\\ &\geq \liminf_{n\to \infty} q_n \cdot q_n^{-(\nu+\varepsilon)\frac{\beta}{\theta}}= \liminf_{n\to \infty} q_n^{1-\frac{\nu+\varepsilon}{\theta}\beta}.
\end{align*}
So, this lower limit is positive for $\beta=\frac{\theta}{\nu+\varepsilon}$. Therefore, by applying Corollary~\ref{cor:dimform}, we get that $\dim_H(\Omega)\geq \frac{\theta}{\nu+\varepsilon}$. Hence, by letting first $\varepsilon$ go to $0$ and then $\theta$ to $\delta$, we obtain that 
$$
\dim_H(\Omega)\geq \frac{\delta}{\nu}.
$$
\end{proof}
\begin{proof}[Proof of Theorem \ref{thm:upbound}]
By the previous results, we only need to prove that $H_\delta(\Omega)<+\infty$. 
Observe the set \[S^1\setminus \bigcup_{k=-n}^n J_k\supset \Omega.\] 
It consists of $2n+1$ disjoint intervals, let us call them $I_p$, whose total length equals 
\[1-\sum_{k=-n}^n |J_k|=\sum_{k<-n}|J_k|+\sum_{k>n}|J_k|.\]
For the classic sequence both of the sums on the RHS are equal and may be trivially bounded by and integral giving
\[\sum_{|k|>n}|J_k|\leq 2\frac{\delta}{\delta-1}c_\delta (n-1)^{1-1/\delta}.\]
As $\delta<1$, function $x^\delta$ is concave and application of Jensen's inequality leads to the following bound on the covering sum (for clarity, put $N=2n+1$ until the last equality).
\[\sum_{p=1}^{N}|I_p|^\delta \leq N\left(\sum_{p=1}^{N}\frac{1}{N}|I_p|\right)^\delta\leq N^{1-\delta}\left(2\frac{\delta}{\delta-1}c_\delta (n-1)^{1-1/\delta}\right)^\delta=\left(2\frac{\delta}{\delta-1}c_\delta\right)^\delta\left(\frac{2n+1}{n-1}\right)^{1-\delta}.\]
Clearly, the lengths (diameters) of the intervals $|I_p|$ shrink to 0 as $n\to +\infty$. Thus, we arrive at
\[H_\delta (\Omega)\leq 2\left(\frac{\delta}{\delta-1}c_\delta\right)^\delta.\qedhere\]  
\end{proof}
\begin{rem}Note that the calculation above works for all irrational $\alpha$, but whenever $\nu>1$ it does not give the proper upper estimate on the dimension. It should not come as a surprise as in the proof above we essentially treat the cover as its elements were of comparable sizes. 

On the other hand, if $\nu>1$, then looking at the rotation by $\alpha$ sometimes the orbit returns very close to the starting point. So in the proof above some of the sets $I_p$ will have very small length in comparison with others. Thus, a cover were the sizes are similar may not be optimal.
\end{rem}
Following the proof of the previous result we get the following generalisation.
\begin{cor}
Assume $\nu=1$. If for every $\varepsilon>0$, there exists a subsequence $(n_s)$ and constant $C<\infty$ such that
\[ \sum_{|k|>n_s}|J_k|\leq C n_s^{1-\frac{1}{\delta+\varepsilon}},\]
then $\dim_H(\Omega)=\delta$.
\end{cor}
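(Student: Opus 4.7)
The plan is to mimic the argument of Theorem~\ref{thm:upbound}, replacing the explicit tail estimate available for the classical sequence by the hypothesized subsequential bound. The lower bound $\dim_H(\Omega) \geq \delta$ is immediate from Proposition~\ref{thm:dimbound} applied with $\nu = 1$ (the Denjoy condition \eqref{eq:denj2} being assumed in effect), so the only substantive content is to prove $\dim_H(\Omega) \leq \delta$.

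To establish the upper bound I would fix an arbitrary $\beta \in (\delta, 1)$ and then choose $\varepsilon > 0$ small enough that $\delta + \varepsilon < \beta$. The hypothesis furnishes a subsequence $(n_s)$ and a constant $C < \infty$ with
\[
\sum_{|k| > n_s} |J_k| \leq C\, n_s^{1 - 1/(\delta+\varepsilon)}.
\]
For each $s$, I would cover $\Omega$ by the $N_s := 2n_s + 1$ disjoint intervals $I_p$ that constitute $S^1 \setminus \bigcup_{|k| \leq n_s} J_k$; by construction their total length equals precisely the left-hand side above. Since $\beta < 1$, the function $x \mapsto x^\beta$ is concave and Jensen's inequality, exactly as deployed in Theorem~\ref{thm:upbound}, yields
\[
\sum_p |I_p|^\beta \leq N_s^{1-\beta}\Bigl(\sum_p |I_p|\Bigr)^\beta \leq (2n_s+1)^{1-\beta}\bigl(C n_s^{1-1/(\delta+\varepsilon)}\bigr)^\beta,
\]
which is a constant multiple of $n_s^{1 - \beta/(\delta+\varepsilon)}$. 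Because $\beta > \delta + \varepsilon$, this exponent is strictly negative, so the right-hand side tends to $0$ as $s \to \infty$. Since $\max_p |I_p|$ is dominated by the total length and therefore also vanishes, the covers are admissible in the definition of $H_\beta$ for all large $s$, giving $H_\beta(\Omega) = 0$ and hence $\dim_H(\Omega) \leq \beta$. Letting $\beta \searrow \delta$ would then close the proof.

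I do not anticipate any serious obstruction: the proof is essentially a bookkeeping upgrade of Theorem~\ref{thm:upbound}, in which the specific power $1 - 1/\delta$ is replaced by $1 - 1/(\delta+\varepsilon)$ and a supremum over $\varepsilon > 0$ is taken at the end. The only mild care needed is to keep $\beta < 1$ so that the direction of Jensen's inequality is the one we want; since $\delta < 1$ this is automatic for $\beta$ sufficiently close to $\delta$ from above, and the interval $(\delta,1)$ is all that is required to witness the dimensional upper bound.
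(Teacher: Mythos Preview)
Your proposal is correct and follows precisely the route the paper intends: it simply adapts the Jensen-inequality covering argument of Theorem~\ref{thm:upbound}, replacing the explicit classical tail bound by the assumed subsequential estimate $\sum_{|k|>n_s}|J_k|\le C n_s^{1-1/(\delta+\varepsilon)}$, and combines the resulting upper bound $\dim_H(\Omega)\le\delta$ with the lower bound from Proposition~\ref{thm:dimbound}. The paper itself gives no separate proof beyond the remark ``Following the proof of the previous result we get the following generalisation,'' so your write-up is in fact more explicit than what appears there.
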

\section*{Acknowledgements}
The research of \L{}ukasz Pawelec was supported in part by National Science Centre, Poland, Grant OPUS21 ``Holomorphic dynamics, fractals, thermodynamic formalism'', 2021/41/B/ST1/00461.

The research of Mariusz Urbanski was supported in part by the Simons Grant: 581668.


\end{document}